\documentclass{amsart}
\usepackage{graphicx}
\usepackage{newlattice}

\DeclareMathOperator{\Princ}{Princ}

\newcommand{\fp}{\F p}
\newcommand{\fq}{\F q}

\newcommand{\Cll}{C_\tup{ll}}
\newcommand{\Cul}{C_\tup{ul}}
\newcommand{\Clr}{C_\tup{lr}}
\newcommand{\Cur}{C_\tup{ur}}
 
\newcommand{\lcorner}[1]{\textup{lc(#1)}}
\newcommand{\rcorner}[1]{\textup{rc(#1)}}


\theoremstyle{plain}
\newtheorem{theorem}{Theorem}

\newtheorem{lemma}[theorem]{Lemma}

\theoremstyle{definition}

\begin{document}
\title[An extension theorem for planar semimodular lattices]
{An extension theorem\\for planar semimodular lattices}  
\author{G. Gr\"{a}tzer} 
\address{Department of Mathematics\\
  University of Manitoba\\
  Winnipeg, MB R3T 2N2\\
  Canada}
\email[G. Gr\"atzer]{gratzer@me.com}
\urladdr[G. Gr\"atzer]{http://server.maths.umanitoba.ca/homepages/gratzer/}

\author{E.\,T. Schmidt}
\address{Mathematical Institute of the Budapest University of
        Technology and Economics\\  
        H-1521 Budapest\\
        Hungary}
\email[E.\,T. Schmidt]{schmidt@math.bme.hu}
\urladdr[E.\,T. Schmidt]{http://www.math.bme.hu/\~{}schmidt/}

\date{Aug. 26, 2013}
\subjclass[2010]{Primary: 06C10. Secondary: 06B10.}
\keywords{principal congruence, order, semimodular, rectangular.}

\begin{abstract}
We prove that every finite distributive lattice $D$
can be represented as the congruence lattice of 
a rectangular lattice $K$ in which all congruences are principal.
We verify this result in a stronger form as an extension theorem.
\end{abstract}

\dedicatory{To L\'aszlo Fuchs,\\
our teacher,\\
on his 90th birthday}
\maketitle

\section{Introduction}\label{S:Introduction}
In G. Gr\"atzer and E.\,T. Schmidt~\cite{GS62}, 
we proved that every finite distributive lattice $D$
can be represented as the congruence lattice of 
a sectionally complemented finite lattice $K$.
In such a lattice, of course, all congruences are principal, 
using the notation of G. Gr\"atzer~\cite{gG13a},
$\Con K = \Princ K$.

Since every finite distributive lattice $D$
can be represented as the congruence lattice of 
a planar semimodular lattice $K$ 
(see G. Gr\"atzer, H. Lakser, and E.\,T. Schmidt~\cite{GLS98a}),
it is reasonable to ask whether instead of the 
sectional complemented lattice of the previous paragraph,
we can construct a planar semimodular lattice~$K$.

G. Gr\"atzer and E. Knapp~\cite{GKn09} proved a result 
stronger than the Gr\"atzer--Lakser--Schmidt result:
every finite distributive lattice $D$
can be represented as the congruence lattice of 
a rectangular lattice $K$---see Section~\ref{S:Rectangular} 
for the definition. 
(For a new proof of this result, 
see G. Gr\"atzer and E.\,T. Schmidt~\cite{GS13b}.)
Keeping this in mind, we prove:

\begin{theorem}\label{T:mainrepr}
Every finite distributive lattice $D$
can be represented as the congruence lattice of 
a rectangular lattice $K$ with the property that
all congruences are principal.
\end{theorem}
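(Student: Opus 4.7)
The plan is to start from the Gr\"atzer--Knapp representation of $D$ as $\Con L_0$ for a rectangular lattice $L_0$, and then extend $L_0$ to a rectangular lattice $K$ satisfying $\Con K\cong D$ in which every congruence is principal, that is, $\Princ K=\Con K$.

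In the Gr\"atzer--Knapp construction the join-irreducibles $\alpha_1,\dots,\alpha_n$ of $\Con L_0$ are in bijection with the join-irreducibles of $D$, and each $\alpha_i$ is carried by a ``color'' (in the sense of $\col$) on prime intervals, so $\alpha_i$ is already principal---generated by any single prime interval of color $i$. The obstruction is therefore concentrated in the proper joins $\alpha_{i_1}\vee\dots\vee\alpha_{i_k}$ with $k\ge 2$: for each such join we need an interval $[a,b]$ in the extension whose prime subintervals realize exactly the color set $\{i_1,\dots,i_k\}$, for then the principal congruence generated by $(a,b)$ is precisely $\alpha_{i_1}\vee\dots\vee\alpha_{i_k}$.

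The key technical step is the construction of a ``principal witness'' gadget. Given a non-singleton $S\subseteq\{1,\dots,n\}$, I would attach to $L_0$ a slim rectangular piece, glued along one of the two side chains (between $\cll$ and $\cul$, or between $\clr$ and $\cur$), producing an interval $[a_S,b_S]$ in the enlarged lattice whose prime intervals use precisely the colors of $S$. Iterating this over all elements of $D$ that are not yet realized as principal produces the final $K$. Throughout, care is required to ensure that each gluing preserves rectangularity: the four corners $\cll,\cul,\clr,\cur$ must remain well-defined and the result must remain slim, planar, and semimodular.

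The delicate part, and what I expect to be the main obstacle, is to verify that these gluings do not enlarge or otherwise disturb the congruence lattice. Concretely, one must analyze the trajectories (equivalently, the swings) of prime intervals after each attachment and confirm that no new join-irreducible color is created and that no two previously distinct colors are forced to merge. If this combinatorial bookkeeping works out for each subset $S$, the resulting rectangular lattice $K$ satisfies $\Con K\cong D$ and $\Princ K=\Con K$, proving the theorem.
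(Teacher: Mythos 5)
Your overall strategy---render each join-reducible congruence principal by gluing a witness gadget onto a boundary chain of a Gr\"atzer--Knapp lattice, iterating over all congruences---is essentially the paper's. But there is a genuine gap at the central step, namely the assertion that you can attach a rectangular piece along a side chain ``producing an interval $[a_S,b_S]$ whose prime intervals use precisely the colors of $S$.'' A gadget glued onto a chain $C$ can only carry colors that already occur on $C$: every prime interval of the attached piece acquires its congruence by projectivity into $C$ (otherwise the gluing would create a new join-irreducible congruence and destroy congruence-preservation). In a general rectangular lattice a join-irreducible congruence may be represented on $\Clr(L)$ but not on $\Cll(L)$, or vice versa (the ``one-sided'' congruences), so there need not be any single boundary chain carrying all the colors of a given $S$. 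Closing this gap is exactly what the paper's two preparatory steps accomplish: first the coordinatization theorem, that every congruence $\bga$ of a rectangular lattice satisfies $\bga = \con{\bga^l \cup \bga^r}$ where $\bga^l, \bga^r$ are the restrictions to $\Cll$ and $\Clr$ (proved by induction along the Cz\'edli--Schmidt fork/gluing structure theorems); and second a normalizing gluing of a gadget $\widehat D$, built from $\Clr \times \Clr$ with eyes on the diagonal, which transports every right-side color onto the lower left chain. Only after that does every join-irreducible congruence appear as a color on the single chain $C = \Cll(K_2)$, and only then can the witness interval be built. Your proposal contains no substitute for these steps.

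Two further points. The witness gadget cannot be slim: the paper takes $C \times \SC{n+1}$ and inserts an eye (an $\SM 3$) in each column, and it is precisely the eyes that let a color turn from the horizontal to the vertical direction so that the $n$ colors $\bgg_1, \dots, \bgg_n$ line up along a single chain $[a,b]$ of length $n$; insisting that the extension remain slim would block the construction (and the final lattice is necessarily non-slim). Finally, the congruence-preservation that you defer to ``combinatorial bookkeeping'' is the other substantive half of the argument; the paper does not track trajectories but relies on the rectangular gluing being taken over a chain on which the gadget's congruences are fully determined, together with Cz\'edli's Corner and Eye Lemmas to see that $\Cll$ of the extended lattice is still congruence-isomorphic to $C$, so that the process can be iterated over all congruences.
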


We prove this  representation result in a much stronger form, 
as an extension theorem.

\begin{theorem}\label{T:mainextension}
Let $L$ be a planar semimodular lattice. 
Then $L$ has an extension $K$ satisfying the following conditions:
\begin{enumeratei}

\item $K$ is a rectangular lattice;

\item $K$ is a congruence-preserving extension of $L$;

\item $K$ is a cover-preserving extension of $L$;

\item every congruence relation of $K$ is principal.

\end{enumeratei}
\end{theorem}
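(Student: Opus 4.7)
The strategy is to handle the substantive requirements in two stages: first produce a rectangular, cover-preserving, congruence-preserving extension, and then modify it so every congruence becomes principal while preserving all four properties.

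\emph{Stage 1: reduction to a rectangular starting point.} I would first embed $L$ into a rectangular lattice $L_1$ that is both a cover-preserving and a congruence-preserving extension. This step is essentially available: the Gr\"atzer--Knapp construction, or its cleaner version in \cite{GS13b}, produces a rectangular hull of a planar semimodular lattice, and (after the usual inspection of corners and boundaries) this hull is cover-preserving and congruence-preserving over $L$. From here on one may assume that $L$ itself is rectangular, so that $D := \Con L$ is a fixed finite distributive lattice whose join-irreducibles correspond one-to-one with the congruence colors of the prime intervals of $L$.

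\emph{Stage 2: making every congruence principal.} Every join-irreducible element of $D$ is automatically a principal congruence, generated by any prime interval of the corresponding color. The issue is the remaining elements of $D$. For each such $\alpha = \gamma_{i_1} \vee \dots \vee \gamma_{i_k}$ that is not yet principal, I would attach a small planar-semimodular \emph{gadget} $G_\alpha$ containing a distinguished prime interval $p_\alpha$ such that, inside the enlarged lattice, $p_\alpha$ is connected by a chain of up- and down-perspectivities precisely to prime intervals of colors $\gamma_{i_1},\dots,\gamma_{i_k}$ (and to nothing of any other color). The gadgets are glued along the boundary of $L_1$, with the rectangular corners $\Cll,\Cul,\Clr,\Cur$ guiding the direction of growth, so that the final lattice is again a rectangle with a single upper-left and a single lower-right corner. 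The principal congruence generated by $p_\alpha$ is then $\alpha$ by semimodular projectivity, and no fresh color is introduced, so $\Con K = D$. Finally the four conditions of Theorem~\ref{T:mainextension} follow: rectangularity is preserved by the controlled boundary gluing, cover-preservation over $L$ holds because all additions occur outside $L$, congruence-preservation holds because every prime interval inside a gadget projects to a color already present in $L_1$, and principality holds because each $\alpha \in D$ is realized as the principal congruence of some $p_\alpha$.

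\emph{Main obstacle.} The bulk of the work will be the gadget construction and its gluing. Each $G_\alpha$ must generate \emph{exactly} the prescribed join of join-irreducibles---not less (or $\Princ K$ would fail to contain $\alpha$) and not more (or spurious colors would enlarge $\Con K$)---while simultaneously fitting into one rectangular, planar, semimodular lattice together with all the other gadgets. Iterating naively is dangerous: repeated attachments risk creating extra doubly-irreducible corners, breaking planarity on the boundary, or opening new perspective chains between unrelated colors. Overcoming this will require a single global layout of the family $\{G_\alpha\}$ along a chosen side of the rectangle (or along both of the non-corner sides in a coordinated way), and a careful verification---using the $\Base$/$\col$ machinery for tracing colors along perspectivities in slim semimodular lattices---that each new prime interval projects to exactly the intended set of existing colors.
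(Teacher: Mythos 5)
Your Stage 1 is exactly the paper's Step 1: apply Theorem~\ref{T:extension} (Gr\"atzer--Knapp) to pass to a rectangular, cover-preserving, congruence-preserving extension. The problem is in Stage 2, and it is conceptual rather than technical. You propose, for each join-reducible congruence $\alpha=\gamma_{i_1}\vee\dots\vee\gamma_{i_k}$, a gadget containing a single distinguished \emph{prime} interval $p_\alpha$ with $\con{p_\alpha}=\alpha$. In a finite lattice the congruence generated by a prime interval is always join-irreducible, so no prime interval can ever generate a join-reducible $\alpha$. Worse, the mechanism you describe---connecting $p_\alpha$ by a chain of up- and down-perspectivities to prime intervals of each of the colors $\gamma_{i_1},\dots,\gamma_{i_k}$---would force $\con{p_\alpha}=\gamma_{i_1}=\dots=\gamma_{i_k}$ in the enlarged lattice, that is, it would identify these formerly distinct join-irreducible congruences and destroy congruence-preservation. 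The two requirements you impose on the gadget ($p_\alpha$ prime and $\con{p_\alpha}$ join-reducible, with no collapse of colors) are mutually exclusive.

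The paper's way out is to realize $\alpha$ as the congruence of a \emph{non-prime} interval: over the chain $C=\Cll(K_2)$ it builds $C[\bga]=C\times\SC{n+1}$ with eyes inserted so that some chain $[a,b]$ of length $n$ has its successive prime intervals colored $\gamma_1,\dots,\gamma_n$; then $\con{a,b}=\gamma_1\vee\dots\vee\gamma_n=\alpha$ is principal and no two colors are merged. For this to be possible one must first arrange that prime intervals of \emph{all} join-irreducible colors appear on a single boundary chain. That is the content of Theorem~\ref{L:patchcongs} (a congruence of a rectangular lattice is determined by its restrictions to $\Cll\uu\Clr$, proved by induction along the fork-insertion and rectangular-gluing structure theorems) combined with Step 2 of the construction, which glues on the ``reflector'' $\widehat D$ (a square of the chain $\Clr(K_1)$ with eyes on the diagonal) to transport the right-side colors onto the left boundary. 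Your proposal is silent on both points---the impossibility of a prime $p_\alpha$ and the need to collect all colors on one chain before building anything---so Stage 2 would have to be rebuilt along these lines, and the iteration over all $\alpha\in\Con L$ justified by checking that $\Cll$ remains congruence-isomorphic to $C$ after each gluing.
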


Observe that we only have to prove 
Theorem~\ref{T:mainextension}.
Indeed, let Theorem~\ref{T:mainextension} hold and 
let $D$ be a finite distributive lattice. 
By G. Gr\"atzer and E. Knapp~\cite{GKn09}, 
there is a planar semimodular lattice $K_1$ whose congruence lattice is isomorphic to $D$. 
By~Theorem~\ref{T:mainextension},
the lattice $K_1$ has a congruence-preserving extension $K$ 
in which every congruence relation is principal. 
This lattice $K$ satisfies the conditions of Theorem~\ref{T:mainrepr}.

We will use the notations and concepts of lattice theory 
as in \cite{LTF}.
See \cite{CFL} for a deeper coverage of finite congruence lattices. 
See G. Cz\'edli and G.~Gr\"atzer~\cite{CGa} and
G. Gr\"atzer~\cite{gG14a} for an overview of semimodular lattices,
structure and congruences.

\section{Background}\label{S:Background}
We need some concepts and results from the literature 
to prove Theorem~\ref{T:mainextension}.

\subsection{Rectangular lattices}\label{S:Rectangular}

Let $L$ be a planar lattice. 
A \emph{left corner} (resp., \emph{right corner}) 
of the lattice $L$ is a doubly-irreducible element in $L - \set{0,1}$ 
on the left (resp., right) boundary of~$L$. 
A \emph{corner} of $L$ is an element in $L$ 
that is either a left or a right corner of $L$. 
G. Gr\"atzer and E. Knapp~\cite{GKn09} 
define a \emph{rectangular lattice} $L$ 
as a planar semimodular lattice 
which has exactly one left corner, $\lcorner L$, 
and exactly one right corner, $\rcorner L$, 
and they are complementary---that is, 
$\lcorner L \jj \rcorner L = 1$ 
and $\lcorner L \mm \rcorner L = 0$. 
In a rectangular lattice $L$, 
there are four boundary chains: 
the lower left, the lower right, 
the upper left, and the upper right, 
denoted by $\Cll(L)$, $\Clr(L)$, $\Cul(L)$, 
and $\Cur(L)$, respectively. 

Let $A$ and $B$ be rectangular lattices. 
We define the \emph{rectangular gluing} of $A$ and $B$ 
as the gluing of $A$ and $B$ over the ideal $I$ and filter $J$, 
where $I$ is the lower left boundary chain of $A$ and
$J$ is the upper right boundary chain of $B$ (or symmetrically). 

We recap some basic facts about rectangular lattices 
(G. Gr\"atzer and E. Knapp~\cite{GKn09} and~\cite{GKn10},
G. Cz\'edli and E.\,T. Schmidt~\cite{CSa} and~\cite{CSb}). 

\begin{theorem}\label{T:rectang}
Let $L$ be a rectangular lattice.
\begin{enumeratei}
\item The ideal $\id {\lcorner L}$ is the chain $\Cll(L)$, and symmetrically.
\item The filter $\fil {\lcorner L}$ is the chain $\Cul(L)$, and symmetrically.
\item For every $a \leq \lcorner L$, 
the interval $[a, \rcorner L \jj a]$ is a chain, and symmetrically. 
\item For every $a \leq \lcorner L$, 
$L$ is a rectangular gluing of the filter $\fil a$
and the ideal $\id{\rcorner L \jj a}$.
\item For every prime interval $\fp$ of the chain 
$[a, \rcorner L \jj a]$, there is a prime interval~$\fq$
of the chain $\Clr$ so that $\fp$ and $\fq$ are perspective.
\end{enumeratei}
\end{theorem}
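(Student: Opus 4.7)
The plan is to prove (i)--(v) in order, each statement feeding into the next, using only planarity, semimodularity, and the defining complementarity $\lcorner L \jj \rcorner L = 1$, $\lcorner L \mm \rcorner L = 0$. The word ``symmetrically'' uniformly refers to the left/right reflection (exchanging $\lcorner L \leftrightarrow \rcorner L$, $\Cll \leftrightarrow \Clr$, $\Cul \leftrightarrow \Cur$), so in each case I only need to handle one side.

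For (i), the inclusion $\Cll(L) \subseteq \id{\lcorner L}$ is immediate, since $\Cll(L)$ is by definition the portion of the left boundary from $0$ up to $\lcorner L$. For the converse, suppose $x \leq \lcorner L$. If $x$ lies on the right boundary of $L$, then $x \leq \lcorner L \mm \rcorner L = 0$, so $x = 0 \in \Cll(L)$. If $x$ lies on the left boundary, then $x \leq \lcorner L$ places $x$ on $\Cll(L)$ directly. The remaining case is $x$ interior; here I would appeal to the planar/semimodular structure together with the uniqueness of the left corner to derive a contradiction (any interior element forced below $\lcorner L$ together with elements of $\Cul(L)$ would produce a second doubly irreducible element on the left boundary distinct from $\lcorner L$). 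Part (ii) is the order-dual of (i) applied to $L^{\mathrm{op}}$, noting that the dual of a rectangular lattice is rectangular.

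For (iii), let $a \leq \lcorner L$, so by (i) we have $a \in \Cll(L)$. Set $b = \rcorner L \jj a$. Since $\rcorner L \in \Clr(L) = \id{\rcorner L}$ by the symmetric form of (i), an inductive application of planar semimodularity along the covers of $a$ up to $\lcorner L$ (or equivalently, tracing the ``grid'' of the diagram) shows $b$ sits on $\Clr(L)$ and that every element of $[a,b]$ lies on a single horizontal chain; this is precisely the slim behavior of planar semimodular lattices. Part (iv) then follows immediately: the chain $[a,b]$ provided by (iii) is simultaneously the lower left boundary chain of the filter $\fil a$ and the upper right boundary chain of the ideal $\id b$, and $L = \id b \cup \fil a$ is then the rectangular gluing along this chain. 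Part (v) is a direct consequence of (iii) and (iv): prime intervals in $[a,b]$ are perspective downward inside $\id b$ to prime intervals of the lower right boundary of $\id b$, which by iterating the gluing decomposition reach prime intervals of $\Clr(L)$.

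The main obstacle is (i): the ``interior'' case rests delicately on the interplay of planarity and the uniqueness of the left corner, and it is here that one must really use that $L$ is rectangular rather than merely planar semimodular. Once (i) (and its symmetric counterpart) is in hand, (ii) is formal, and (iii)--(v) unfold from the slim-planar-semimodular toolkit of Czédli--Schmidt, whose relevant lemmas I would cite rather than reprove.
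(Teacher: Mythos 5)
The paper offers no proof of Theorem~\ref{T:rectang}: it is explicitly a recap of known facts, cited to G. Gr\"atzer and E. Knapp~\cite{GKn09} and to G. Cz\'edli and E.\,T. Schmidt~\cite{CSa}, \cite{CSb}. So there is no in-paper argument to compare yours against; your sketch has to stand on its own, and as written it has several genuine gaps. First, the crux of (i) --- ruling out an \emph{interior} element $x \le \lcorner L$ --- is only announced (``I would appeal to\dots''), not argued; that is exactly the nontrivial content of the statement. Second, (ii) is not the order-dual of (i): the dual of a rectangular lattice is in general \emph{not} rectangular, because the dual of an (upper) semimodular lattice is lower semimodular, and a finite lattice that is both is modular --- so the patch lattice $\SfS 7$ is rectangular while its dual is not even semimodular. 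Hence (i) applied to $L^{\mathrm{op}}$ is not available, and (ii) needs its own argument (meet-irreducibility of $\lcorner L$ combined with semimodularity applied along the chain $\id{\rcorner L}$). Third, in (iii) the element $b = \rcorner L \jj a$ lies on $\Cur(L) = \fil{\rcorner L}$, not on $\Clr(L) = \id{\rcorner L}$ as you claim; the correct picture is that $[a,b]$ is a ``horizontal'' chain running from the lower left to the upper right boundary.

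Your argument for (v) is circular: the assertion that prime intervals of $[a,b]$ are ``perspective downward inside $\id b$ to prime intervals of the lower right boundary of $\id b$'' is precisely statement (v) for the rectangular lattice $\id b$, whose left corner is $a$, whose right corner is $\rcorner L$, and for which the chain $[a, \rcorner L \jj a]=[a,b]$ is the whole upper left boundary --- so you are assuming what is to be proved. A genuine proof needs either an induction on the length of $[a,b]$ or an explicit down-perspectivity $[x,y] \mapsto [x \mm \rcorner L,\, y \mm \rcorner L]$, and verifying $y = x \jj (y \mm \rcorner L)$ is not free in a merely semimodular, non-modular lattice; this is where the structural machinery of \cite{CSa}, \cite{CSb} actually enters. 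Since the paper itself treats all five items as quotable background, the honest options are either to cite \cite{GKn09} and \cite{CSb} outright, as the paper does, or to supply the missing arguments above; the present sketch does neither.
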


Note that it follows from (v) that
\[
   \con{\Cul} =  \con{a, \rcorner L \jj a} 
   = \con{\Clr}.
\]

\subsection{Eyes}\label{S:Eyes}
Let $L$ be a planar lattice.
An interior element of an interval of length two 
is called an \emph{eye} of $L$.
We will \emph{insert} and \emph{remove} eyes in the obvious sense.
A planar semimodular lattice $L$ is \emph{slim} 
if it has no eyes.

\subsection{Forks}\label{S:Forks}
We need from G. Cz\'edli and E.\,T. Schmidt~\cite{CSb}
the fork construction.

\begin{figure}[b]
\centerline{\includegraphics{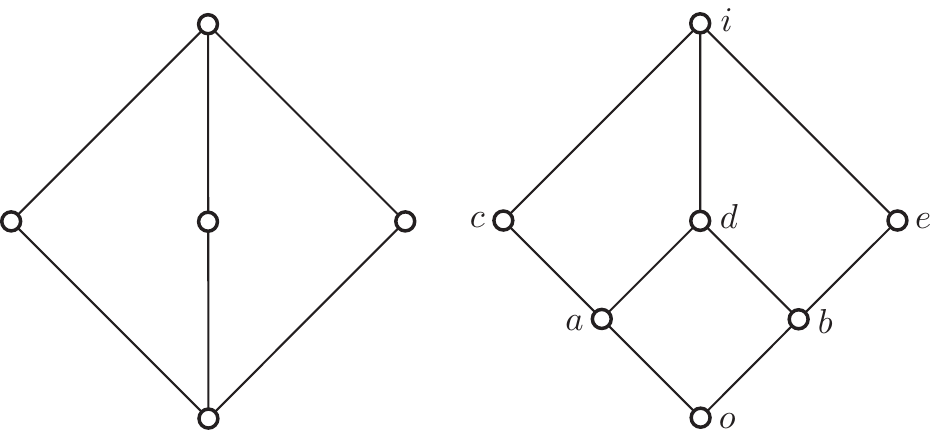}}
\caption{The lattices $\SM 3$ and $\SfS 7$}\label{F:s7}
\end{figure}

\begin{figure}[b]
\centerline{\includegraphics{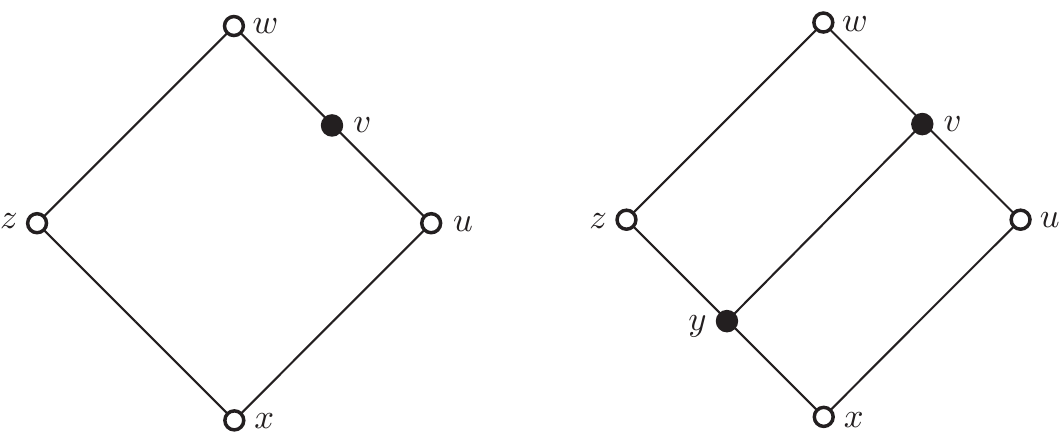}}
\caption{A step in inserting a fork}\label{F:step}
\end{figure}

\begin{figure}[t]
\centerline{\includegraphics{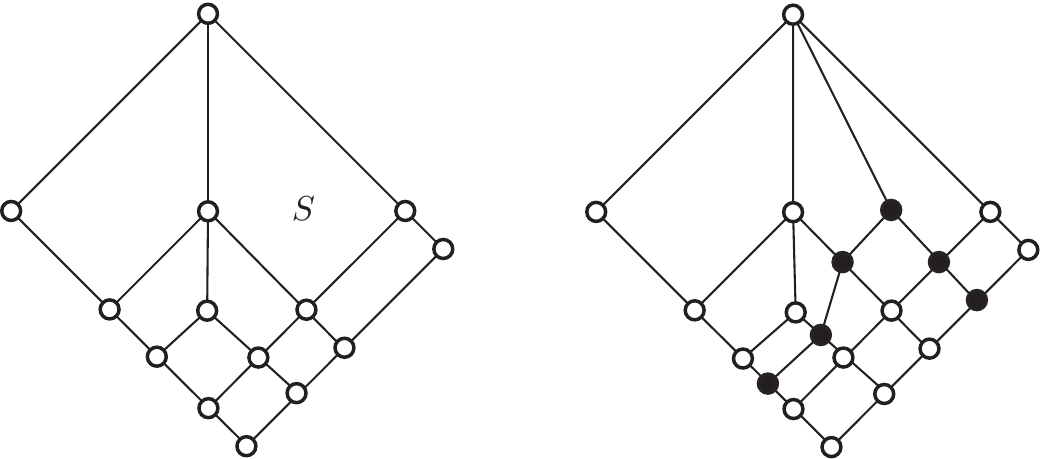}}
\caption{Inserting a fork at $S$}\label{F:forks}
\end{figure}

Let $L$ be a planar semimodular lattice. Let $L$ be slim. 
\emph{Inserting a fork} into $L$ at the covering square $S$, firstly, 
replaces $S$ by a~copy of $\SfS 7$. 
We get three new covering squares replacing~$S$ of $L$.
We will name the elements of the inserted $\SfS 7$ as in Figure~\ref{F:s7}.

Secondly, if there is a chain 
$u\prec v\prec w$ such that~the element $v$ has just been inserted 
(the element $a$ or $b$ in $\SfS 7$ in the first step) and 
$
   T = \set{x=u \mm z, z, u, w=z \jj u}
$
is a covering square in the lattice~$L$ 
(and so $u \prec v \prec w$ is not on the boundary of $L$) 
but $x \prec z$ at the present stage of the construction,
then we insert a new element~$y$ into the interval $[x, z]$
such that $x \prec y \prec z$ and $y \prec v$, 
see Figure~\ref{F:step}. 
We get two covering squares to replace the covering square~$T$.

Let $K$ denote the lattice we obtain when the procedure terminates 
(that is, when the new element is on the boundary);
see Figure~\ref{F:forks} for an example. 

The new elements form an order, called a \emph{fork} 
(the black filled elements in Figure~\ref{F:forks}).
We say that $K$ is obtained from
$L$ by \emph{inserting a fork into $L$} at the covering square~$S$.

Here are some basic facts, based on G.~Cz\'edli and E.\,T.~Schmidt~\cite{CSb}, about this construction.
 
\begin{lemma}\label{L:forks}
Let $L$ be a planar semimodular lattice 
and let $S$ be a covering square in~$L$.
If $L$ is slim, then inserting a fork into $L$ at~$S$ 
we obtain a slim planar semimodular lattice $K$.
If $L$ is rectangular, so is $K$.

If $y$ is an element of the fork outside of $S$, 
then $[y_*, y]$ is up-perspective to $[o, a]$ or $[o, b]$, 
where $y_*$ is the lower cover of $y$ in $K - L$.
\end{lemma}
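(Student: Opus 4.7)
The plan is to induct on the number of steps in the fork-insertion procedure. The base case is the initial replacement of the covering square $S$ by a copy of $\SfS 7$; direct inspection of Figure~\ref{F:s7} shows that $\SfS 7$ is itself a slim planar semimodular lattice, and since it agrees with $S$ at the four corner elements, substituting it for $S$ inside $L$ preserves planarity, semimodularity, slimness, and (since only interior elements are added) rectangularity of the ambient lattice.

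For the inductive step, I would consider a single iteration of the step rule: a covering square $T = \set{x, z, u, w}$ of the original lattice $L$ is split, in the current lattice $L_n$, by inserting $y$ with $x \prec y \prec z$ and $y \prec v$, where $v$ was placed in the previous step inside $[u, w]$. First I would verify that the operations extend uniquely to a lattice structure on $L_{n+1} = L_n \cup \set{y}$; this is forced by $y = z \mm v$ together with the prescribed covering relations. Planarity follows from the placement pictured in Figure~\ref{F:step}. Semimodularity is a local property, so it suffices to check it on the two new covering squares $\set{x, y, u, v}$ and $\set{y, z, v, w}$ that replace $T$ in the current picture. The slimness check is essentially bookkeeping: the only length-two intervals to worry about are $[x, v]$ and $[y, w]$, each of which retains exactly two middle elements, since a hypothetical third middle element would already lie in $L_n$ strictly between the endpoints and contradict the slimness of $L_n$. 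Rectangularity is preserved because $y$ is again interior to the planar diagram, leaving the four boundary chains $\Cll(L), \Clr(L), \Cul(L), \Cur(L)$ and the two corners $\lcorner L, \rcorner L$ untouched.

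I expect the up-perspectivity claim to be the main obstacle, and I would prove it by induction along the tree of fork-insertion steps. At each step the relations $x = y \mm u$ and $v = y \jj u$ show directly that the new prime interval $[x, y] = [y_*, y]$ is up-perspective to $[u, v]$. Since up-perspectivity does not compose to up-perspectivity in general, the induction must produce a single direct up-perspectivity of $[y_*, y]$ with $[o, a]$ or $[o, b]$ in the final lattice $K$, rather than merely a chain of elementary ones. I would establish this by verifying, inside $K$, the two explicit equalities $y \mm o = y_*$ and $y \jj o \in \set{a, b}$, with the branch chosen according to the position of $y$ in the fork; both equalities should follow from the semimodularity of $K$, the tree structure of the fork, and the observation that in each elementary step the newly inserted $y$ has a unique parent among the previously inserted fork elements.
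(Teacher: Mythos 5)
The paper itself offers no proof of this lemma: it is presented as a list of ``basic facts, based on G.~Cz\'edli and E.\,T.~Schmidt~\cite{CSb}'', so there is nothing internal to compare your argument against, and it must be judged on its own merits. The central gap is that your inductive invariant---that each intermediate lattice $L_n$ obtained after $n$ single insertions is again a slim planar semimodular lattice, so that semimodularity need only be checked ``on the two new covering squares that replace $T$''---is false. Inserting $v$ into the edge $[u,w]$ destroys the covering $u\prec w$; if $T=\set{x=u\mm z, z, u, w}$ is the $4$-cell adjacent to that edge, then in $L_n\cup\set{v}$ we still have $x\prec z$ while $x\jj u=u$ and $z\jj u=w$ with $u<v<w$, so semimodularity already fails at $T$ \emph{before} $y$ is inserted. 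Inserting $y$ into $[x,z]$ repairs $T$ but destroys the covering $x\prec z$ and thereby breaks the next cell sharing that edge in exactly the same way; this is precisely why the procedure must run until it reaches the boundary. Your base case already has this defect whenever $S$ is not adjacent to the boundary. A correct induction must carry a weaker invariant (for instance: every cell is a $4$-cell except the at most one cell on each leg adjacent to the current front of the propagation), with semimodularity recovered only in the terminal lattice $K$. Relatedly, your rectangularity argument (``the four boundary chains are untouched'') is wrong: by the termination rule the last element inserted on each leg lies \emph{on} a lower boundary chain, so $\Cll(K)$ and $\Clr(K)$ are strictly longer than those of $L$; rectangularity survives because each such new boundary element has two upper covers, hence is not doubly irreducible and cannot become a new corner.

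On the perspectivity claim your premise is mistaken in the opposite direction: up-perspectivity of intervals is transitive in any lattice. If $a=b\mm c$, $d=b\jj c$, $c=d\mm e$, and $f=d\jj e$, then $c\le e$ gives $b\jj e\ge b\jj c=d$, hence $b\jj e=d\jj e=f$, while $b\mm e\le d\mm e=c$ gives $b\mm e=b\mm c=a$. So the chain of elementary up-perspectivities $[y_*,y]\nearrow[v_*,v]\nearrow\cdots\nearrow[o,a]$ that you correctly exhibit already composes into a single up-perspectivity, and the separate verification of $y\mm o=y_*$ and $y\jj o\in\set{a,b}$ inside $K$ is unnecessary (it is exactly what transitivity yields). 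This part of your argument reaches the right conclusion, but the stated obstacle is not real.
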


\subsection{Patch lattices}\label{S:Patch}
Let us call a rectangular lattice $L$ a \emph{patch lattice} 
if $\lcorner A$ and $\rcorner A$ are dual atoms; 
Figure~\ref{F:s7} has two examples.
The next lemma is a trivial application of Lemma~\ref{L:forks}.

\begin{lemma}\label{L:patch}
Let $L$ be a slim patch lattice 
and let $S$ be a covering square in $L$.
Inserting a fork into $L$ at~$S$, we obtain a slim patch lattice $K$.
\end{lemma}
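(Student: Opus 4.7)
The first half is essentially free: a patch lattice is by definition rectangular, so Lemma~\ref{L:forks} already guarantees that inserting a fork at the covering square $S$ yields a slim rectangular lattice $K$. The substance of the proposition is therefore to verify that $\lcorner K$ and $\rcorner K$ are dual atoms, i.e., that the \emph{patch} property survives the fork construction.

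The key observation is that the fork construction alters only the \emph{interior} of $L$ and leaves its four boundary chains intact. Indeed, the first step replaces $S$ by $\SfS 7$, adding four new elements strictly between the vertices of the covering square~$S$; and each propagation step inserts an element $y$ strictly between two elements $x \prec z$ of a covering square $T=\set{x,z,u,w}$ of $L$, where, by the stated rule, $u\prec v\prec w$ is required to be \emph{not} on the boundary of $L$. Hence no element of any boundary chain of $L$ is ever removed, and no new boundary element is ever produced, so
\[
\Cll(K)=\Cll(L),\ \Clr(K)=\Clr(L),\ \Cul(K)=\Cul(L),\ \Cur(K)=\Cur(L).
\]
In particular, the corners are preserved: $\lcorner K=\lcorner L$ and $\rcorner K=\rcorner L$.

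Now invoke Theorem~\ref{T:rectang}(ii) in $K$: $\fil{\lcorner K}=\Cul(K)$ and $\fil{\rcorner K}=\Cur(K)$. Since $L$ is a patch lattice, $\Cul(L)=\set{\lcorner L,1}$ and $\Cur(L)=\set{\rcorner L,1}$; combining this with the equality of boundary chains established above gives $\fil{\lcorner K}=\set{\lcorner K,1}$ and $\fil{\rcorner K}=\set{\rcorner K,1}$. Thus $\lcorner K$ and $\rcorner K$ are dual atoms of $K$, which is exactly the patch-lattice condition.

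The only place that requires care—and it is mild, which is why the paper calls the result a trivial application of Lemma~\ref{L:forks}—is the claim that the boundary chains are untouched by the propagation. This is immediate from the very formulation of the propagation rule (the clause ``$u\prec v\prec w$ is not on the boundary of $L$'') and is visible in Figure~\ref{F:forks}; no further calculation is needed.
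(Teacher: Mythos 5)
Your overall strategy is the right one---Lemma~\ref{L:forks} gives slimness and rectangularity for free, so the only thing left to check is that the corners of $K$ are still dual atoms---but the central claim you use to get there is false. You assert that the fork construction ``leaves the four boundary chains intact'' and that ``no new boundary element is ever produced.'' This contradicts the construction itself: the procedure is declared to terminate precisely ``when the new element is on the boundary,'' so each of the two descending prongs of the fork ends with a new element inserted into a prime interval of the lower-left (resp.\ lower-right) boundary chain. Hence $\Cll(K)$ and $\Clr(K)$ each properly contain $\Cll(L)$ and $\Clr(L)$. This is not a side issue for the paper: Case~3 of the proof of Theorem~\ref{L:patchcongs} relies exactly on the terminating prime interval of a prong lying on a lower boundary chain. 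Your reading of the parenthetical clause ``$u\prec v\prec w$ is not on the boundary of $L$'' is backwards: that clause merely records that the existence of the covering square $T$ to the side of the edge $[u,w]$ forces $[u,w]$ to be interior; it does not prevent the new element $y$ from being inserted into the edge $[x,z]$ of $T$, which at the final step of the propagation \emph{is} a boundary edge.

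The lemma itself is still within reach, because the dual-atom condition involves only the upper boundary chains, and those really are untouched: the prongs propagate strictly downward from $S$, every new element lies below $1_S$, and no new element is inserted into a prime interval of $\Cul(L)$ or $\Cur(L)$. One should also say a word about why the corners do not move: each terminal prong element $y$ acquires two upper covers ($z$ and the fork element above it), so it is not doubly irreducible and cannot become a corner, while $\lcorner L$ and $\rcorner L$ retain exactly one lower and one upper cover. With $\lcorner K=\lcorner L$ and $\Cul(K)=\Cul(L)=\set{\lcorner L,1}$, Theorem~\ref{T:rectang}(ii) gives the dual-atom property as you intended. (The paper offers no written proof, calling the lemma a trivial application of Lemma~\ref{L:forks}, so there is nothing to compare against in detail; but as written, your argument rests on a false statement about the lower boundary chains and on a misreading of the termination rule.)
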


\subsection{The structure theorems}\label{S:structure}
Now we state the structure theorems for patch lattices and rectangular lattices
of G.~Cz\'edli and E.\,T.~Schmidt~\cite{CSb}.

\begin{theorem}\label{T:patchstructure} 
Let $L$ be a patch lattice. 
Then we can obtain $L$ from the four-element Boolean lattice $\SC 2^2$
by first inserting forks, then inserting eyes.
\end{theorem}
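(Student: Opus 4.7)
The plan is to proceed by two nested inductions that separate the roles of eyes and forks. Because an eye is by definition an interior element of a length-two interval, it can be removed from $L$ without disturbing any other covering square, and the resulting lattice is still planar semimodular and rectangular with the same corner elements. I would therefore begin by stripping off the eyes of $L$ one at a time, obtaining a slim patch lattice $L^{\flat}$; running the process backwards recovers $L$ from $L^{\flat}$ by eye insertions alone. This reduces the theorem to the sub-claim that every \emph{slim} patch lattice can be obtained from $\SC 2^2$ by successive fork insertions.

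For the slim sub-claim I would induct on $|L^{\flat}|$. The base case $|L^{\flat}| = 4$ is immediate, since $\SC 2^2$ is the unique slim patch lattice of that size. For the inductive step, the task is to exhibit a smaller slim patch lattice $M$ together with a covering square $S$ of $M$ such that inserting a fork into $M$ at $S$ reproduces $L^{\flat}$; Lemma~\ref{L:patch} then makes the inductive hypothesis applicable to $M$ and closes the loop.

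The main obstacle is locating a removable ``youngest fork'' inside $L^{\flat}$. I would look for an $\SfS 7$-shaped configuration whose central covering square, together with the cascade of auxiliary elements produced by the propagation rule of Figure~\ref{F:step}, forms a set $F$ that can be excised cleanly. The invariant singled out by the second half of Lemma~\ref{L:forks} is the essential bookkeeping tool: every fork element $y$ outside the base square satisfies that $[y_{*}, y]$ is up-perspective to $[\bo, a]$ or $[\bo, b]$. Starting from a suitably placed doubly-irreducible interior element (guided by the four boundary chains of $L^{\flat}$ given in Theorem~\ref{T:rectang}) and following these up-perspective prime intervals inward, I would assemble $F$ and then verify that $M := L^{\flat} \setminus F$ remains planar, semimodular and rectangular, with $\lcornern M$ and $\rcornern M$ still complementary dual atoms so that $M$ is indeed a patch lattice. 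Checking that the chosen $F$ is exactly the fork that would be inserted into $M$ at the image of the original base square of $F$ — so that the deletion and the fork insertion are honest inverses of one another — is the delicate step, and is where the Cz\'edli--Schmidt fork machinery cited in the excerpt does the heavy lifting.
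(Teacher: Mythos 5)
You should first note that the paper does not prove this statement at all: Theorem~\ref{T:patchstructure} is quoted as one of the structure theorems of G.~Cz\'edli and E.\,T.~Schmidt~\cite{CSb}, so there is no internal proof to match your argument against. Judged on its own merits, your proposal has the right global shape --- it is essentially the strategy of \cite{CSb}: slim the lattice by deleting eyes (legitimate, since an eye is an interior, hence non-corner, doubly irreducible element of a length-two interval, and its removal preserves planarity, semimodularity, and the patch property), reduce to the slim case, and then induct on size by exhibiting a fork whose removal produces a smaller slim patch lattice to which Lemma~\ref{L:patch} and the induction hypothesis apply.

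The genuine gap is in the inductive step, and it is not a small one: you never establish that a slim patch lattice $L^{\flat} \neq \SC 2^2$ actually \emph{contains} a removable fork. Your plan is to ``look for an $\SfS 7$-shaped configuration'' and to use the up-perspectivity property in the second half of Lemma~\ref{L:forks} to trace out the candidate set $F$; but that lemma describes elements that are already known to have been inserted as a fork, so invoking it presupposes the very decomposition you are trying to prove --- the argument is circular as stated. What is needed, and what you omit, is an intrinsic criterion (not referring to any prior construction history) that locates inside an arbitrary slim patch lattice larger than $\SC 2^2$ a specific set of elements whose deletion yields a slim patch lattice $M$, together with a verification that inserting a fork into $M$ at the image of the base square reproduces $L^{\flat}$. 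Deferring exactly this step to ``the Cz\'edli--Schmidt fork machinery'' is deferring the entire content of the theorem; everything surrounding it (the eye-stripping reduction, the base case, the appeal to Lemma~\ref{L:patch}) is routine bookkeeping. As written, the proposal is a correct outline of the known proof, not a proof.
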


\begin{theorem}\label{T:rectangularstructure} 
Let $L$ be a rectangular lattice. Then there is a sequence of lattices
\[
   K_1, K_2, \dots, K_n = L
\]
such that each $K_i$, for $i = 1, 2, \dots, n$, 
is either a patch lattice or it is the rectangular gluing of the lattices $K_j$ and $K_k$ for $j, k < i$. 
\end{theorem}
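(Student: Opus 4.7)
The plan is to argue by induction on $|L|$. For the base case, the smallest rectangular lattice is $\SC 2^2$, which is a patch lattice, so the sequence $K_1 = L$ suffices. For the inductive step, if $L$ is itself a patch lattice, we take $n = 1$, $K_1 = L$ and we are done. Otherwise, I would decompose $L$ as a nontrivial rectangular gluing of two smaller rectangular lattices $L_1$ and $L_2$; by the inductive hypothesis, each admits its own sequence, and concatenating these and appending $L$ at the end yields the required sequence.

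The nontrivial decomposition comes from Theorem~\ref{T:rectang}(iv). The key observation is that, if $L$ is not a patch lattice, then one of the corners must fail to be an atom. Indeed, suppose to the contrary that both $\lcorner L$ and $\rcorner L$ are atoms. Since $\lcorner L \mm \rcorner L = 0$, they both cover $0$; semimodularity then forces $\lcorner L \jj \rcorner L = 1$ to cover each of $\lcorner L$ and $\rcorner L$, so they are dual atoms as well, contradicting the assumption that $L$ is not a patch lattice. Hence, up to the left--right symmetry of Theorem~\ref{T:rectang}, we may assume that $\lcorner L$ is not an atom, so there exists $a \in \Cll(L) = \id{\lcorner L}$ with $0 < a < \lcorner L$. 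Applying Theorem~\ref{T:rectang}(iv) with this $a$, we express $L$ as the rectangular gluing of $L_1 = \fil a$ and $L_2 = \id{\rcorner L \jj a}$, both of which are rectangular.

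The main obstacle I anticipate is confirming that this decomposition is strictly nontrivial, that is, both $L_1$ and $L_2$ are proper. The filter $L_1 = \fil a$ is clearly proper because $a > 0$. For the ideal $L_2 = \id{\rcorner L \jj a}$, one must verify that $\rcorner L \jj a < 1$ whenever $a < \lcorner L$, equivalently, that the map $x \mapsto \rcorner L \jj x$ is strictly order-preserving on $\Cll(L)$. This should follow from parts (iii) and (v) of Theorem~\ref{T:rectang}: the chain $[a, \rcorner L \jj a]$ is pinned down up to perspectivity by an initial segment of $\Clr(L)$, so it cannot reach $1$ until $a$ reaches $\lcorner L$. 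Once this strict monotonicity is in place, both $L_1$ and $L_2$ are strictly smaller than $L$, and the induction closes routinely by concatenating the sequences produced for the two summands.
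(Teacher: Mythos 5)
The paper does not prove this theorem at all: it is quoted as background from G.~Cz\'edli and E.\,T.~Schmidt \cite{CSb} in Section~\ref{S:structure}, so there is no in-paper argument to measure yours against. Judged on its own, your induction is essentially the right (and standard) way to get this decomposition, and the skeleton is sound: the case split on whether $L$ is a patch lattice; the observation that if both $\lcorner L$ and $\rcorner L$ were atoms, then semimodularity applied to $\lcorner L \mm \rcorner L = 0$ would make both of them dual atoms and $L$ a patch lattice, so a non-patch $L$ has a corner that is not an atom; the choice of $a$ with $0 < a < \lcorner L$ in the chain $\id{\lcorner L} = \Cll(L)$; and the appeal to Theorem~\ref{T:rectang}(iv), which already asserts that $\fil a$ and $\id{\rcorner L \jj a}$ are rectangular and that $L$ is their rectangular gluing. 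Concatenating the two inductively obtained sequences and appending $L$ gives a sequence of exactly the required form.

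The one step you have not actually established is the properness of the lower piece, that is, that $a < \lcorner L$ forces $\rcorner L \jj a < 1$. Parts (iii) and (v) of Theorem~\ref{T:rectang} do not yield this as stated: (v) only says that each prime interval of the chain $[a, \rcorner L \jj a]$ is perspective to \emph{some} prime interval of $\Clr$, with no injectivity, so ``pinned down by an initial segment of $\Clr$'' is a hope rather than an argument. A clean way to close the gap is via grading: $L$ is semimodular, hence graded, so $\operatorname{length}(\Cll)+\operatorname{length}(\Cul)=\operatorname{length}(\Clr)+\operatorname{length}(\Cur)$. By semimodularity the map $x \mapsto x \jj \rcorner L$ sends each cover of the chain $\Cll$ to a cover or an equality in the chain $\Cur$, and it sends $0$ to $\rcorner L$ and $\lcorner L$ to $1$, so it is onto $\Cur$ and $\operatorname{length}(\Cur) \leq \operatorname{length}(\Cll)$; symmetrically $\operatorname{length}(\Cul) \leq \operatorname{length}(\Clr)$. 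The displayed equality forces both inequalities to be equalities, so $x \mapsto x \jj \rcorner L$ is a bijection of $\Cll$ onto $\Cur$, hence strictly monotone, and $\rcorner L \jj a < 1$ for $a < \lcorner L$. With that supplied, both $\fil a$ and $\id{\rcorner L \jj a}$ are proper and your induction closes.
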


See also G. Gr\"atzer and E. Knapp~\cite{GKn10} and G.~Gr\"atzer~\cite{gG13}.

\subsection{A congruence-preserving extension}
\label{S:congruence-preserving}
Finally, we need the following result of 
G. Gr\"atzer and E. Knapp \cite{GKn09}.

\begin{theorem}\label{T:extension}
Let $L$ be a planar semimodular lattice. 
Then there exists a rectangular, cover-preserving, and 
congruence-preserving extension $K$ of $L$.
\end{theorem}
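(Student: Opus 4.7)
The plan is to embed $L$ into a planar semimodular lattice $K$ of rectangular shape by attaching auxiliary chain-like pieces on top of $L$, arranged so that no new congruences are introduced and no existing covers are disturbed.

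First, I would analyze the boundary structure of $L$. The left and right boundaries of $L$ are chains from $0_L$ to $1_L$, but in general neither end of $L$ looks like the corner of a rectangle: candidates for a left corner $l$ and a right corner $r$ may fail to be unique doubly-irreducible boundary elements, and the two candidates that are available may not satisfy $l \jj r = 1$ and $l \mm r = 0$. The first task is thus to identify natural corner candidates in $L$ (for instance, the highest doubly-irreducible element on each boundary that meets to $0_L$ with its counterpart) and to quantify exactly how $L$ fails to be rectangular around them.

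Second, I would construct $K$ by gluing auxiliary rectangular or chain-shaped pieces onto $L$ along the upper-left and upper-right portions of its boundary, and symmetrically on the lower side when needed. The pieces are chosen and glued so that: (a)~the new top of $K$ is the join of the topmost attached element on the left with the topmost attached element on the right; (b)~these two topmost attached elements become the unique left and right corners of $K$ and are complementary; (c)~every newly created prime interval is perspective to a prime interval of $L$ on the adjoining boundary chain of $L$. Condition~(c) is the heart of the matter: by perspectivity every congruence of $L$ extends uniquely to $K$, and a congruence of $K$ collapses a new prime interval exactly when it collapses its perspective partner in $L$, so $K$ becomes a congruence-preserving extension. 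Cover-preservation is automatic because attachment never modifies the covering relation inside $L$.

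Third, I would verify the four conditions. Planarity and semimodularity of $K$ follow because each attached piece is itself planar and semimodular and is glued to $L$ along a chain, which preserves both properties. Rectangularity follows from~(a) and~(b). Cover-preservation and congruence-preservation follow from the perspectivity argument just sketched.

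The main obstacle will be guaranteeing uniqueness of the left and right corners in $K$: any doubly-irreducible boundary element of $L$ other than the two intended corners must be absorbed, either into the interior of $K$ or into one of the four new boundary chains $\Cll(K), \Clr(K), \Cul(K), \Cur(K)$. A single chain-attachment on each upper side will not suffice in general, so the construction must be iterated, and one must track carefully which original boundary elements become interior after each attachment. This book-keeping, together with the combined verification of semimodularity, rectangularity, and the perspectivity used for congruence-preservation, is where the delicate work lies.
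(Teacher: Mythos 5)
A preliminary remark: the paper does not prove Theorem~\ref{T:extension} at all; it is imported verbatim as a result of G. Gr\"atzer and E. Knapp~\cite{GKn09}, so there is no in-paper argument to measure yours against. Evaluated on its own terms, what you have written is a statement of the desired properties of the extension rather than a construction that achieves them, and the deferred steps are exactly where the theorem lives.

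Two concrete gaps. First, you never say what the attached pieces are, and your condition (c) --- every new prime interval is perspective to a prime interval of $L$ --- is precisely the property that is hard to arrange simultaneously with rectangularity. If one completes $L$ to a rectangle by adjoining any sizeable block of new elements, one typically creates prime intervals that are not congruence-equivalent to anything in $L$; these generate new join-irreducible congruences and congruence-preservation fails. The known proof avoids this by enlarging $L$ one doubly irreducible element at a time, each placed so that its new prime intervals are perspective to existing ones; nothing in your outline guarantees that such placements always exist or that the process terminates in a rectangular lattice. Second, the assertion that planarity and semimodularity are preserved ``because each attached piece is planar and semimodular and is glued to $L$ along a chain'' is not a usable lemma: a Hall--Dilworth gluing of planar semimodular lattices over a chain need not be semimodular unless the chain sits in the boundary in a specific way (this is why the rectangular gluing of Section~\ref{S:Rectangular} glues a full lower-left boundary chain to a full upper-right one), and for a non-rectangular $L$ the upper boundary is not even a filter, so the gluing you describe is not defined as stated. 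Your congruence-preservation argument is also only half of what is needed: perspectivity of new prime intervals to old ones shows that a congruence of $K$ is determined by its restriction to $L$, but you must also show that every congruence of $L$ extends to a congruence of $K$ restricting back to it, i.e., that collapsing the new intervals forces no extra collapsing inside $L$. Cover-preservation, by contrast, is indeed automatic for genuine filter/ideal gluings, so that part of your sketch is sound.
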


\section{congruences of rectangular lattices}\label{S:cong}

To prove Theorem~\ref{T:mainextension}, 
we need a ``coodinatization'' 
of the congruences of rectangular lattices.

\begin{theorem}\label{L:patchcongs}
Let $L$ be a rectangular lattice and let $\bga$ be a congruence of $L$.
Let~$\bga^l$ denote the restriction of $\bga$ to $\Cll$. 
Let $\bga^r$ denote the restriction of $\bga$ 
to $\Clr$. 
Then the congruence $\bga$ is determined 
by the pair $(\bga^l,\bga^r)$. In fact, 
\[
   \bga = \con{\bga^l \uu \bga^r}.
\]
\end{theorem}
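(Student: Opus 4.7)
The plan is to establish $\bga = \con{\bga^l \uu \bga^r}$ by showing that both sides collapse the same prime intervals of $L$. Writing $\bga^* := \con{\bga^l \uu \bga^r}$, the inclusion $\bga^* \leq \bga$ is immediate since $\bga^l \uu \bga^r \subseteq \bga$. Because a congruence on a finite lattice is uniquely determined by the set of prime intervals it collapses, the reverse inclusion will follow once I establish the following purely structural claim: \emph{in a rectangular lattice $L$, every prime interval $\fp$ is projective to a prime interval $\fq$ in $\Cll(L) \uu \Clr(L)$.} Indeed, granting this, if $\bga$ collapses $\fp$, then by projectivity $\bga$ collapses $\fq$, so $\fq$ lies in $\bga^l \uu \bga^r$; hence $\bga^*$ collapses $\fq$, and by projectivity also $\fp$.

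To prove the structural claim I would induct on $|L|$, using Theorem~\ref{T:rectang}. Let $\fp = [x,y]$ and set $a := x \mm \lcornern L$, which lies in $\Cll(L)$ by~(i). If $y \leq \rcornern L \jj a$, then by~(iii) the interval $[x,y]$ sits inside the chain $[a, \rcornern L \jj a]$, and~(v) furnishes a prime interval of $\Clr(L)$ perspective to $\fp$. Otherwise, by the gluing decomposition~(iv), $\fp$ lies in the filter $\fil a$, itself a rectangular lattice with $\Cll(\fil a) = [a, \lcornern L] \subseteq \Cll(L)$ and $\Clr(\fil a) = [a, \rcornern L \jj a]$, whose prime intervals are by~(v) perspective to those of $\Clr(L)$. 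When $a > 0$, $\fil a$ is strictly smaller than $L$, so the inductive hypothesis applied inside $\fil a$ produces a prime interval in $\Cll(\fil a) \uu \Clr(\fil a)$ projective to $\fp$, which then transfers to $\Cll(L) \uu \Clr(L)$. The case $a = 0$ is handled symmetrically using $b := x \mm \rcornern L$; the residual case $a = b = 0$ forces $x = 0$, and the atomic prime interval $[0,y]$ is either on a boundary chain directly, or else $y$ is an eye, and the length-two covering interval containing the eye supplies a prime interval on the boundary perspective to $\fp$.

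The principal obstacle is the structural claim itself: carefully tracking projectivities through the nested descents into filters, and dispatching the base cases involving eyes at the bottom of $L$. The induction is powered by Theorem~\ref{T:rectang}(iv), which guarantees that each descent into a nontrivial filter $\fil a$ preserves rectangularity and strictly decreases $|L|$, so that every prime interval of $L$ is eventually reduced to the boundary modulo projectivity.
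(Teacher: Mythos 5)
Your overall strategy---reduce everything to the purely structural claim that every prime interval of a rectangular lattice is projective to a prime interval in $\Cll(L) \uu \Clr(L)$, and prove that claim by an induction on $|L|$ powered solely by Theorem~\ref{T:rectang}---is genuinely different from the paper's proof, which instead climbs the structure theorems: it verifies the congruence condition (P) through the fork-by-fork construction of slim patch lattices (Theorem~\ref{T:patchstructure}), then through eye insertion, and finally through rectangular gluings (Theorem~\ref{T:rectangularstructure}). Your route is more elementary and self-contained, and the reduction itself is correct: projective prime intervals are collapsed by exactly the same congruences, and a congruence of a finite lattice is determined by the prime intervals it collapses. The descent into $\fil a$ is also sound, since $\Cll(\fil a) = [a, \lcorner L] \ci \Cll(L)$ and Theorem~\ref{T:rectang}(v) transports the prime intervals of $\Clr(\fil a) = [a, \rcorner L \jj a]$ into $\Clr(L)$.

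The gap is in the terminal case. The assertion that $a = b = 0$ forces $x = 0$ is false: $x \mm \lcorner L = x \mm \rcorner L = 0$ says only that $x$ lies above neither the atom of $\Cll(L)$ nor the atom of $\Clr(L)$, and every interior atom (an eye over $0$) satisfies this. The paper's own first example, $\SM 3$, is a patch lattice whose middle atom $e$ meets both corners in $0$ yet $e \neq 0$; the prime interval $[e,1]$ therefore falls through your case analysis (it is in fact perspective to $[0,p]$ for $p$ the left corner, so the structural claim survives, but your argument never reaches it). This is not a one-off degeneracy: after each descent into $\fil a$ the new bottom element is exactly $x \mm \lcorner L$, so the eye-over-zero configuration is the generic terminal state of your induction and must be treated head-on. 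A second, smaller slip in the same case: for $y$ an eye over $0$, the interval $[0,y]$ is up-perspective only to prime intervals on the upper edges of the length-two interval containing $y$, which lie on no lower boundary chain; you need one further transposition (for instance, $[0,y]$ up to $[c_1, c_1 \jj y]$ with $c_1$ the atom of $\Cll(L)$, followed by Theorem~\ref{T:rectang}(v) applied to the chain $[c_1, \rcorner L \jj c_1]$) to land in $\Clr(L)$. Once the residual case is restated as ``$x = 0$ or $x$ is a doubly irreducible interior atom'' and both subcases are dispatched by such explicit transpositions, your proof goes through.
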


\begin{proof}
Since $\bga \geq \con{\bga^l \uu \bga^r}$, 
it is sufficient to prove that
\begin{enumeratei}
\item[(P)] if the prime interval $\fp$ of $A$ is collapsed 
by the congruence $\bga$, 
then it is collapsed 
by the congruence $\con{\bga^l \uu \bga^r}$.

\end{enumeratei}

First, let $L$ be a slim patch lattice.
By Theorem~\ref{T:patchstructure}, we obtain $L$ from the square, $\SC 2 ^2$, with a sequence of $n$ fork insertions. 
We induct on $n$.

If $n = 0$, then $L = \SC 2 ^2$, and the statement is trivial.

Let the statement hold for $n - 1$ and let $K$ be the patch lattice
we obtain by $n - 1$ fork insertions into $\SC 2 ^2$,
so that we obtain $L$ from $K$
by one fork insertion at the covering square $S$.
We have three cases to consider.

Case 1. $\fp$ is a prime interval of $K$. 
Then the statement holds for $\fp$ and $\bga\restr_K$, 
the restriction of $\bga$ to $K$ by induction.
So $\fp$ is collapsed by $\con{(\bga\restr_K)^l \uu (\bga\restr_K)^r}$
in $K$. Therefore, (P) holds in $L$.

In the next two cases, 
we assume that $\fp$ is not in $K$.

Case 2. $\fp$ is perspective to a prime interval of $K$. 
Same proof as in Case 1. This case includes $\fp = [o,a]$ and 
any one of the new intervals up-perspective with $[o,a]$.

Case 3.  $\fp = [a,c]$ and  
any one of the new intervals is up-perspective with $[a,c]$.
Then the fork extension defines 
the terminating prime interval $\fq = [y,z]$ on the boundary
of $L$ which is up-perspective with $\fp$, verifying (P).

Secondly, let $L$ be a patch lattice, not necessarily slim.
This case is obvious because (P) is preserved when inserting an eye.

Finally, if $L$ is not a patch lattice, we induct on $|L|$.
By Theorem~\ref{T:rectangularstructure}, $L$ is the rectangular gluing 
of the rectangular lattices $A$ and $B$ 
over the ideal $I$ and filter~$J$. 
Let $\fp$ be a prime interval of $L$. 
Then $\fp$ is a prime interval of $A$ or~$B$, say, of $A$.
(If $\fp$ is a prime interval of $B$, then the argument is easer.)
By~induction, $\fp$ is collapsed by 
$\con{\,\bga\restr_{\Cll(A)\,} \uu \bga\restr_{\Clr(A)}}$,
so it is collapsed by $\con{\,\bga\restr_{\Cll(L)} \uu \bga\restr_{\Clr(L)}\,} = \con{\bga^l \cup \bga^r}$.
\end{proof}

\section{Construction}\label{S:Construction}

Now we proceed with the construction 
for the planar semimodular lattice $L$ for Theorem~\ref{T:mainextension}.

\tbf{Step 1.}
We apply Theorem~\ref{T:extension} to get  
a rectangular, cover-preserving, and 
congruence-preserving extension $K_1$ of $K$.

\tbf{Step 2.}
Let $D = \Clr(K_1)$. 
We form the lattice $D^2$, 
and insert eyes into the covering squares of the main diagonal,
obtaining the lattice $\widehat D$, see Figure~\ref{F:cons2}.

Now we do a rectangular gluing of $K_1$ and $\widehat D$, 
obtaining the lattice $K_2$.
Here is the crucial statement:

\begin{lemma}\label{L:K2}
$K_2$ is a rectangular, cover-preserving, and 
congruence-preserving extension of $L$.
For every join-irreducible congruence $\bga$ of $L$, 
there is a prime interval $\fp_{\bga}$ of $C = \Cll(K_2)$
such that $\con{\fp_{\bga}}$ in $K_2$ 
is the unique extension of $\bga$ to $K_2$.
\end{lemma}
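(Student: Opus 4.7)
My plan is to verify the four assertions by leaning on Theorem~\ref{L:patchcongs}: congruences of a rectangular lattice are determined by their restrictions to $\Cll$ and $\Clr$. The structural parts~(i) and~(iii) come for free. Indeed, $\widehat D$ is rectangular since $D^2$ is and inserting eyes into (interior) main-diagonal covering squares leaves the boundary untouched, so the rectangular gluing $K_2$ is rectangular. Cover-preservation follows by composing the cover-preserving extension $L \subseteq K_1$ of Step~1 with the inclusion $K_1 \hookrightarrow K_2$, which is cover-preserving by the definition of (Hall--Dilworth) gluing.

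For~(ii), I reduce to showing that $K_2$ is a congruence-preserving extension of $K_1$, and the key claim is that every congruence $\bgb$ of $\widehat D$ is already determined by its restriction to the shared chain $\Cul(\widehat D) \cong D$. By Theorem~\ref{L:patchcongs} applied inside $\widehat D$, such a $\bgb$ is captured by the pair $(\bgb^l,\bgb^r)$ of restrictions to $\Cll(\widehat D)$ and $\Clr(\widehat D)$, both canonically $\cong D$. The crux is that the eyes couple $\bgb^l$ and $\bgb^r$: each eye sits as an extra atom of an $\SM 3$ occupying a main-diagonal covering square $[(a,a),(b,b)]$, and since $\SM 3$ is simple, the restriction of $\bgb$ to this $\SM 3$ is trivial or total. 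Hence if $\bgb$ collapses the vertical prime interval $[(a,a),(a,b)]$, it must also collapse the horizontal $[(a,a),(b,a)]$, and vice versa; transferring across the up-perspectivities of $D^2$ then forces $\bgb^l$ and $\bgb^r$ to collapse precisely the same prime intervals of $D$, so $\bgb^l = \bgb^r$. Consequently every congruence of $D$ has a unique extension to $\widehat D$, and gluing produces a unique extension of each congruence of $K_1$ to $K_2$.

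For the final clause, let $\bga$ be a join-irreducible congruence of $L$ with extension $\bga_1$ to $K_1$. Since $\bga_1$ is join-irreducible in the rectangular lattice $K_1$, Theorem~\ref{T:rectang}(v) supplies a prime interval $\fq$ of $\Clr(K_1) = D$ with $\con{\fq} = \bga_1$ in $K_1$. Let $\fp_\bga$ be the corresponding prime interval $\set{0}\times\fq$ of $\Cll(\widehat D) \subseteq C$. To verify that $\con{\fp_\bga}$ in $K_2$ is the (necessarily unique) extension of $\bga$, I push the coupling through the gluing: since $\fp_\bga$ lies in $\Cll(\widehat D)$, the eye-coupling forces the restriction of $\con{\fp_\bga}$ (in $\widehat D$) to $\Cul(\widehat D)$ to equal $\con{\fq}$ in $D$, which is precisely the restriction of $\bga_1$ to $D = \Clr(K_1)$; the unique-extension property from~(ii) then identifies the restriction of $\con{\fp_\bga}$ (in $K_2$) to $K_1$ with $\bga_1$, which in turn restricts to $\bga$ on $L$. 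The main obstacle throughout is the eye-coupling argument --- everything else reduces to routine bookkeeping via Theorems~\ref{L:patchcongs} and~\ref{T:rectang}.
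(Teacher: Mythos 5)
Your treatment of the structural claims (rectangularity, cover-preservation) and your eye-coupling analysis of $\widehat D$ are sound --- indeed more explicit than the paper, which takes those points for granted and only argues the prime-interval claim. The genuine gap is in your final paragraph: you assert that, because $\bga_1$ is join-irreducible, Theorem~\ref{T:rectang}(v) supplies a prime interval $\fq$ of $\Clr(K_1)=D$ with $\con{\fq}=\bga_1$. Theorem~\ref{T:rectang}(v) says no such thing --- it only asserts that prime intervals of the chains $[a,\rcorner{K_1}\jj a]$ are perspective into $\Clr(K_1)$ --- and the assertion itself is false. Already for $K_1=\SC 2^2$ the join-irreducible congruence generated by the prime interval $[0,\lcorner{K_1}]$ of $\Cll(K_1)$ is not generated by any prime interval of $\Clr(K_1)$; this is exactly the notion of a \emph{left-sided} congruence from the paper's Discussion. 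For such a congruence your recipe $\fp_\bga=\set{0}\times\fq$ is undefined, so the lemma is left unproved for precisely those $\bga$.

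The repair is the case split the paper makes, driven by Theorem~\ref{L:patchcongs} rather than Theorem~\ref{T:rectang}(v): since $\bga_1=\con{\bga_1^l\uu\bga_1^r}$ and $\bga_1$ is join-irreducible, there is a prime interval $\fq_\bga^l\ci\Cll(K_1)$ \emph{or} a prime interval $\fq_\bga^r\ci\Clr(K_1)$ generating $\bga_1$. In the first case nothing needs to be transferred: $\Cll(K_1)\ci\Cll(K_2)=C$, so one sets $\fp_\bga=\fq_\bga^l$. Only in the second case is the passage through the diagonal $\SM 3$ of $\widehat D$ --- which your last paragraph describes correctly --- actually needed. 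With that two-case structure inserted, the rest of your argument goes through.
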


\begin{proof}
Indeed, by Theorem~\ref{L:patchcongs}, 
there is a prime interval $\fq_{\bga}^l$ of $\Cll(K_1)$
or a prime interval $\fq_{\bga}^r$ of $\Clr(K_1)$
such that $\con{\fq_{\bga}^l}$ or $\con{\fq_{\bga}^r}$ in $K_1$ 
is the unique extension of $\bga$ to~$K_1$.
If we have  $\fq_{\bga}^l \ci \Cll(K_1) \ci C$, 
set $\fq_{\bga}^l= \fp_{\bga}$ and we are done.

If we have $\fq_{\bga}^r \ci \Clr(K_1)$ 
with $\con{\fq_{\bga}^r}$ the unique extension of $\bga$ to~$K_1$,
then in~$K_2$ there is a unique $\fq \ci \Cll(\widehat D) \ci \Cll(K_2)$
such that in $\widehat D$, the prime intervals $\fq_{\bga}^r$ and $\fq$ 
are connected by an $\SM 3$ on the main diagonal; 
see Figure~\ref{F:cons2expl} for an illustration.

Now clearly, we can set $\fp_{\bga} = \fq$.
\end{proof}
 
Note: Lemma \ref{L:K2} is a variant of several published results.
Maybe G. Cz\'edli \cite[Lemma 7.2]{gC12} is its closest predecessor.

\begin{figure}[t]
\centerline{\includegraphics{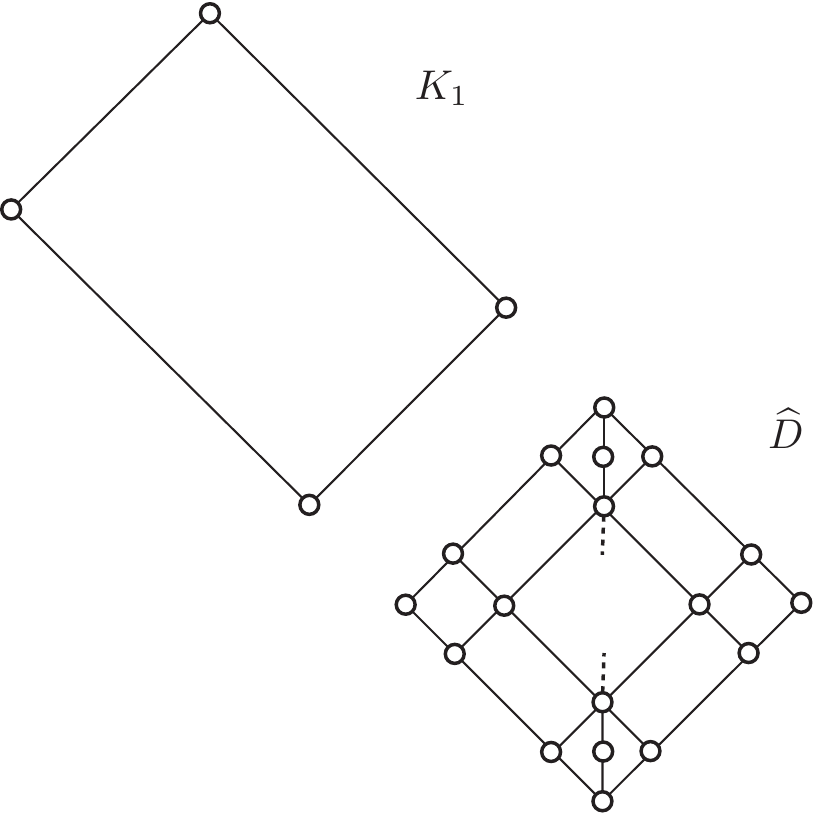}}
\caption{Step 2 of construction}\label{F:cons2}
\end{figure}

\begin{figure}[t]
\centerline{\includegraphics{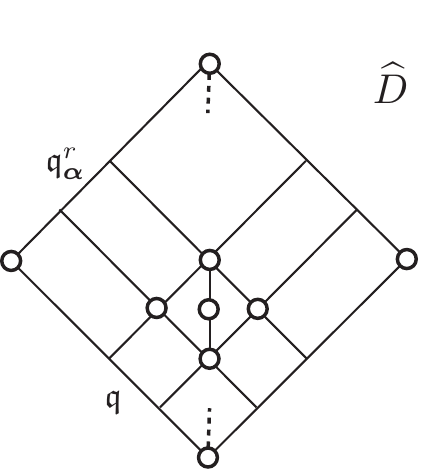}}
\caption{Step 2 of construction: a detail of the lattice $\widehat D$}\label{F:cons2expl}
\end{figure}

\tbf{Step 3.}
For the final step of the construction,
take the chain  $C = \Cll(K_2)$ and a~congruence $\bga$ of $L$.
We can view $\bga$ as a congruence of $K_2$ and let
$
   \bga = \bgg_1 \jj \cdots \jj \bgg_n
$ 
be a join-decomposition of $\bga$ into join-irreducible congruences. 
By Theorem~\ref{L:patchcongs} and (P), 
we can associate with each $\bgg_i$, for $i = 1, \dots, n$, 
a prime interval $\fp_i$ of $C$ so that $\con{\fp_i} = \bgg_i$. 

We construct a rectangular lattice $C[\bga]$ 
(a cousin of $\widehat D$) as follows:

Let $\SC {n+1} = \set{0 < 1 < \dots < n}$. 
Take the direct product $C \times \SC {n+1}$.
We think of this direct product as consisting of $n$ columns,
column $1$ (the bottom one), \dots, column $n$ (the top one). 

In column $i$, for $1 \leq i \leq n$, 
we take the covering square 
whose upper right edge is perspective to~$\fp_i$ and insert an eye. 
In the covering $\SM 3$ sublattice we obtain, 
every prime interval~$\fp$ satisfies $\con \fp = \bgg_i$. 
See Figure~\ref{F:cons3} for an illustration with $n = 3$; 
a prime interval~$\fp$ is labelled with $\bgg_i$ 
if $\con \fp = \bgg_i$.

Let $b$ denote the top element of the $\SM 3$ 
we constructed for $\fp_n$, clearly, we have $b \in \Cur(C[\bga])$.
Take the element $a \in \Cll(C[\bga])$ 
so that the interval $[a, b]$ is a chain of length $n$. 
Then the $n$ prime intervals $\fq_1$, \dots, $\fq_{n}$ 
of $[a, b]$ satisfy  
\[\con {\fq_1} = \bgg_1, \dots, \con{\fq_{n}} = \bgg_n,\]  
so $\con{[a, b]} = \bga$,
finding that in the lattice $C[\bga]$, 
the congruence $\bga$ is principal.

We identify $C$ with $\Cur(C[\bga])$; 
note that this is a ``congruence preserving'' isomorphism: 
for a prime interval  $\fp$ of $C$, 
the image $\fp'$ of $\fp$ in $\Cur(C[\bga])$ 
satisfies $\con{\fp} = \con{\fp'}$.

\begin{figure}[b]
\centerline{\includegraphics{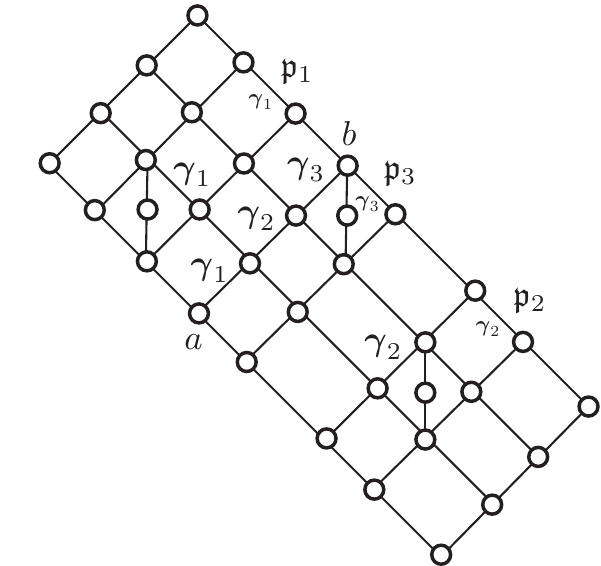}}
\caption{Step 3 of construction: the lattice $C[\bga]$}\label{F:cons3}
\end{figure}

Now we form the rectangular gluing of $C[\bga]$ with filter $C$ 
and $K_2$ with the ideal~$C$ to obtain the lattice $K_2[\bga]$.
Obviously, $K_2[\bga]$ is a rectangular lattice,
it is a cover-preserving 
congruence-preserving extension of $K_2$ and, therefore, of $L$.

It is easy to see that $\Cll(K_2[\bga])$ is still 
(congruence) isomorphic to $C$; for a rigorous treatment
see the Corner Lemma and the Eye Lemma in G. Cz\'edli~\cite{gC12}
as they are used in the proof of \cite[Lemma 7.2]{gC12}. 
We can continue this expansion 
with all the congruences of $L$. 
In the last step, we get the lattice $K_3 = K$, 
satisfying all the conditions of Theorem~\ref{T:mainextension}.

\subsection{Discussion}\label{S:Discussion}
Let $L$ be a rectangular lattice and 
let $\bga$ be a join-irreducible congruence of $L$.
We call $\bga$ \emph{left-sided}, 
if there a prime interval $\fp \ci \Cll(L)$ 
such that $\con{\fp} = \bga$ 
but there is no such $\fp \ci \Clr(L)$. 
In the symmetric case, 
we call $\bga$ \emph{right-sided}.
The congruence $\bga$ is \emph{one-sided} 
if it is left-sided or right-sided. 
The~congruence $\bga$ is \emph{two-sided} 
if it is not one-sided.

Using these concepts, we can further analyze 
Theorem~\ref{L:patchcongs} and condition (P).
By~Theorems \ref{T:patchstructure} and \ref{T:rectangularstructure}, 
we build a rectangular lattice from a grid 
(the direct product of two chains) by inserting first forks and then eyes.
At~the start, all join-irreducible congruences
are one-sided. 
When we insert a fork, we introduce a two-sided congruence.
When we insert an eye, we identify two congruences, 
resulting in a two-sided congruence.

What congruence pairs occur in Theorem~\ref{L:patchcongs}? 
Let $\bgb_l$ be a congruence of $\Cll(L)$ and 
let $\bgb_r$ be a congruence of $\Clr(L)$.
Under what conditions is there a congruence~$\bga$ of $L$
such that $\bga^l = \bgb_l$ and $\bga^r = \bgb_r$?
Here is the condition: 
If $\fp$ is a prime interval of $\Cll(L)$ collapsed by $\bgb_l$  and there is a prime interval $\fq$ of $\Clr(L)$ with $\con \fp = \con \fq$, 
then $\fq$ is collapsed by $\bgb_r$; and symmetrically.

In Step 3 of the construction, we use the chain $\SC {n+1}$.
Clearly, $\SC n$ would have sufficed. Can we use, in general, shorter chains?

In a finite sectionally complemented lattice, 
the congruences are determined around the zero element. 
So it is clear that 
for finite sectionally complemented lattices, 
all congruences are principal.

For a finite semimodular lattice, 
the congruences are scattered all over. 
So it is somewhat surprising 
that Theorem~\ref{T:mainrepr} holds.

For modular lattices, the situation is similar 
to the semimodular case. 
E.\,T. Schmidt \cite{tS62} proved that every 
finite distributive lattice $D$
can be represented as the congruence lattice of 
a countable modular lattice $K$. 
(See also G. Gr\"atzer and E.\,T. Schmidt \cite{GS99} 
and \cite{GS03a}.) It is an interesting question whether
Theorem~\ref{T:mainrepr} holds for
countable modular lattices.

The congruence structure of planar semimodular lattices 
is further investigated in three recent papers: 
G.~Cz\'edli~\cite{gCa}, \cite{gCb} and G.~Gr\"atzer~\cite{gG13c}.

\end{document}